
\documentclass[11pt]{article}
\usepackage{amssymb}
\usepackage{amsfonts}
\usepackage{amsmath}

\setcounter{MaxMatrixCols}{10}

\newtheorem{theorem}{Theorem}[section]

\newtheorem{conjecture}[theorem]{Conjecture}

\newtheorem{lemma}[theorem]{Lemma}

\newtheorem{remark}[theorem]{Remark}

\newenvironment{proof}[1][Proof]{\noindent\textbf{#1.} }{\ \rule{0.5em}{0.5em}}
\input{tcilatex}
\begin{document}

\title{Euler characteristics and actions of automorphism groups of free
groups}
\author{Shengkui Ye \\
Xi'an Jiaotong-Liverpool University}
\maketitle

\begin{abstract}
Let $M^{r}$ be a connected orientable manifold with the Euler characteristic 
$\chi (M)\not\equiv 0\func{mod}6$. Denote by $\mathrm{SAut}(F_{n})$ the
unique subgroup of index two in the automorphism group of a free group. Then
any group action of $\mathrm{SAut}(F_{n})$ (and thus the special linear
group $\mathrm{SL}_{n}(\mathbb{Z})$) $(n\geq r+2$) on $M^{r}$ by
homeomorphisms is trivial. This confirms a conjecture related to Zimmer's
program for these manifolds.
\end{abstract}

\section{Introduction}

Let $\mathrm{SL}_{n}(\mathbb{Z})$ be the special linear group over integers.
There is an action of $\mathrm{SL}_{n}(\mathbb{Z})$ on the sphere $S^{n-1}$
induced by the linear action on the Euclidean space $\mathbb{R}^{n}.$ It is
believed that this action is minimal in the following sense.

\begin{conjecture}
\label{conj}Any action of $\mathrm{SL}_{n}(\mathbb{Z})$ $(n\geq 3)$ on a
compact connected $r$-manifold by homeomorphisms factors through a finite
group action if $r<n-1.$
\end{conjecture}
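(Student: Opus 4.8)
The plan is to show that the image $\phi(\mathrm{SL}_n(\mathbb{Z}))$ is finite by first reducing to an essentially faithful action and then deriving a contradiction from the torsion in $\mathrm{SL}_n(\mathbb{Z})$. Writing $\phi\colon\mathrm{SL}_n(\mathbb{Z})\to\mathrm{Homeo}(M^r)$ for the given action, its kernel $\ker\phi$ is a normal subgroup, and since $\mathrm{SL}_n(\mathbb{Z})$ with $n\ge 3$ is an irreducible higher-rank lattice, the Margulis normal subgroup theorem forces $\ker\phi$ to be either finite (central) or of finite index. In the finite-index case the image is finite and the action factors through a finite group, as desired, so the entire difficulty is to rule out the case in which $\ker\phi$ is central, that is, in which $\phi$ induces a faithful action of $\mathrm{PSL}_n(\mathbb{Z})$ on $M^r$. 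As a sanity-anchoring base case, for $r=1$ such a faithful action is already excluded by the bounded-cohomology rigidity of Ghys and Burger--Monod, and I would treat larger $r$ by the scheme below.

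Against a faithful action I would exploit the rich finite subgroups of $\mathrm{SL}_n(\mathbb{Z})$ together with the constraints that compact transformation groups place on a low-dimensional manifold. Concretely, signed permutation matrices and the standard-representation copy of the Weyl group supply a chain of elementary abelian $p$-subgroups $A\cong(\mathbb{Z}/p)^{k}$ whose rank $k$ grows linearly in $n$, while $r$ stays below $n-1$. Under the hypothetical faithful action each such $A$ acts effectively on $M^r$, and I would analyze its fixed-point data through Smith theory and Borel's formula for $p$-torus actions, while tracking the restriction of the action to the normalizer of $A$, a large finite subgroup $W\subset\mathrm{SL}_n(\mathbb{Z})$ that permutes the characters of $A$. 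The Lefschetz/Euler-characteristic bookkeeping is the engine here: for elements and subgroups of finite order one computes $\chi(\mathrm{Fix})$ from $\chi(M)$, which both forces fixed sets to exist and pins down their location and dimension.

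The aim of this analysis is to show that an elementary matrix $E_{ij}$, which lies in the group generated by these finite subgroups and is tied to them through the Steinberg commutator relations, must preserve and act trivially on the locus fixed by the surrounding torsion; since the $E_{ij}$ generate $\mathrm{SL}_n(\mathbb{Z})$, triviality of all of them contradicts faithfulness. Propagating a single common fixed locus across all generators by means of the relations $[E_{ij},E_{jk}]=E_{ik}$ (available for distinct $i,j,k$ precisely because $n\ge 3$) is the mechanism that converts local finite-group information into a global statement about the whole group.

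The step I expect to be the main obstacle is making the transformation-group dimension bound valid for an \emph{arbitrary} compact manifold rather than for spheres or for manifolds with controlled Euler characteristic. On general manifolds such as tori, products, and their connected sums, large elementary abelian $p$-groups do act effectively, so a naive rank-versus-dimension inequality fails, and the nonabelian action of the normalizer $W$ on the fixed-point data of $A$ must be brought in to defeat these examples. This is exactly the point at which the hypothesis $\chi(M)\not\equiv 0\bmod 6$ enters in the special case treated in the paper, guaranteeing nonempty and suitably small fixed sets; replacing that Euler-characteristic input by an equivariant Lefschetz argument or a refined Smith-theoretic estimate valid for all values of $\chi(M)$ is the crux of a proof of the conjecture in full generality.
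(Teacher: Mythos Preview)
The statement you are trying to prove is labeled \emph{Conjecture} in the paper, and the paper does not prove it; it only establishes the special case (Theorem~\ref{main}) where $\chi(M)\not\equiv 0\pmod 6$, and the introduction makes clear that for $C^0$ actions the full conjecture remains open. So there is no ``paper's own proof'' to compare against, and your proposal should be judged as an attack on an open problem.

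Your reduction via the Margulis normal subgroup theorem to ruling out a faithful $\mathrm{PSL}_n(\mathbb{Z})$-action is sound, and the impulse to exploit the large elementary abelian subgroups through Smith theory and Borel's formula is exactly what the paper does in the restricted setting. But you yourself identify the genuine gap: without an assumption like $\chi(M)\not\equiv 0\pmod p$, there is no mechanism forcing the fixed sets of the $p$-tori to be nonempty or to obey a useful rank-versus-dimension inequality---$(\mathbb{Z}/p)^k$ acts effectively on $T^k$ for every $k$, so the Mann--Su and Borel bounds simply do not apply. Your suggestion that the normalizer action on the fixed-point data or an ``equivariant Lefschetz argument'' should substitute for the Euler-characteristic hypothesis is a hope, not an argument; nothing in the proposal explains how that would go. The endgame is also underspecified: you assert that an elementary matrix $E_{ij}$ must act trivially on the common fixed locus of the torsion and then propagate via $[E_{ij},E_{jk}]=E_{ik}$, but the Steinberg relations are identities among the infinite-order generators $E_{ij}$ themselves and do not, by themselves, tie $E_{ij}$ to the fixed sets of the finite-order elements, nor does fixing a proper sublocus force $E_{ij}$ to act trivially on all of $M$. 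In short, the proposal correctly isolates where the difficulty lies but does not supply the missing idea, which is precisely the content of the open conjecture.
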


The smooth version of this conjecture was formulated by Farb and Shalen \cite%
{fs}, and is related to the Zimmer program concerning group actions of
lattices in Lie groups on manifolds (see the survey articles \cite{fi,zm}
for more details). When $r=1,$ Conjecture 1.1 is already proved by Witte 
\cite{wi}. Weinberger \cite{we} confirms the conjecture when $M=T^{r}$ is a
torus. Bridson and Vogtmann \cite{bv} confirm the conjecture when $M=S^{r}$
is a sphere. Ye \cite{ye1} confirms the conjecture for all flat manifolds.
For $C^{1+\beta }$ group actions of finite-index subgroup in $\mathrm{SL}%
_{n}(\mathbb{Z}),$ one of the results proved by Brown, Rodriguez-Hertz and
Wang \cite{brw} confirms Conjecture \ref{conj} for surfaces. For $C^{2}$
group actions of cocompact lattices, Brown-Fisher-Hurtado \cite{bfh}
confirms Conjecture \ref{conj}. Note that the $C^{0}$ actions could be very
different from smooth actions. It seems that very few other cases have been
confirmed (for group actions preserving extra structures, many results have
been obtained, cf. \cite{fi,zm}).

Let $\mathrm{SAut}(F_{n})$ denote the unique subgroup of index two in the
automorphism group $\mathrm{Aut}(F_{n})$ of the free group $F_{n}$. Note
that there is a surjection $\phi :\mathrm{SAut}(F_{n})\rightarrow \mathrm{SL}%
_{n}(\mathbb{Z})$ given by the abelianization of $F_{n}$. In this note, we
obtain the following general result on topological actions.

\begin{theorem}
\label{main}Let $M^{r}$ be a connected (resp. orientable) manifold with the
Euler characteristic $\chi (M)\not\equiv 0\func{mod}3$ (resp. $\chi
(M)\not\equiv 0\func{mod}6$). Then any group action of $\mathrm{SAut}(F_{n})$
$(n>r+1$) on $M^{r}$ by homeomorphisms is trivial.
\end{theorem}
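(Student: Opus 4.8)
The plan is to fix a prime $p\in\{2,3\}$ with $p\nmid\chi(M)$ and to drive the whole argument with $p$-torsion in $\mathrm{SAut}(F_n)$. In the general connected case only $p=3$ is available, forcing $3\nmid\chi(M)$; when $M$ is orientable I may instead take $p=2$, and since the hypothesis $6\nmid\chi(M)$ is precisely ``$2\nmid\chi(M)$ or $3\nmid\chi(M)$'', this is exactly what relaxes the congruence condition from mod $3$ to mod $6$. The topological engine is the classical congruence $\chi(M)\equiv\chi(M^{P})\pmod p$, valid for any finite $p$-group $P$ acting on the compact manifold $M$ because the non-fixed cells fall into free $p$-orbits. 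Consequently, for every $p$-subgroup $P\le\mathrm{SAut}(F_n)$ the fixed set $M^{P}$ is nonempty and satisfies $\chi(M^{P})\equiv\chi(M)\not\equiv0\pmod p$; thus each such fixed set is a compact submanifold of lower dimension obeying the same Euler-characteristic hypothesis, which is what makes an induction on $\dim M$ possible.

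I would set up this induction on $r=\dim M$, the cases $r\le 1$ following from the one-dimensional results (Witte \cite{wi}) together with the fact that $\mathrm{SAut}(F_n)$ is perfect for $n\ge 3$. For the inductive step, note first that perfectness also forces the action on an orientable $M$ to be by orientation-preserving homeomorphisms (the sign homomorphism $\mathrm{SAut}(F_n)\to\mathbb Z/2$ is trivial), so that both for $p=2$ (orientable case) and for $p=3$ the fixed set of a nontrivially acting order-$p$ element has \emph{even} codimension $\ge 2$ by Smith theory. I would then take a \emph{minimal-support} order-$p$ element $\tau$: for $p=3$ the automorphism of $\langle x_1,x_2\rangle$ given by $x_1\mapsto x_2,\ x_2\mapsto x_1^{-1}x_2^{-1}$, and for $p=2$ the involution $x_1\mapsto x_1^{-1},\ x_2\mapsto x_2^{-1}$, each extended by the identity on $x_3,\dots,x_n$. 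Its centralizer contains the copy of $\mathrm{SAut}(F_{n-2})$ supported on the remaining generators, and this subgroup preserves $M^{\tau}$. Writing $r'=\dim M^{\tau}\le r-2$, the inequality needed to apply the inductive hypothesis is $n-2>r'+1$, i.e.\ $n>r'+3$, and this holds because $n>r+1\ge r'+3$; hence $\mathrm{SAut}(F_{n-2})$ acts trivially on $M^{\tau}$. This is the step where the support size $2$ and the even codimension conspire to make the bound close exactly at $n>r+1$.

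To pass from these partial triviality statements to the full conclusion I would use that, for $n\ge 3$, the Nielsen transvections are all conjugate in $\mathrm{SAut}(F_n)$ and generate it, so it suffices to prove that a single transvection, say $\rho\colon x_1\mapsto x_1x_2$, acts as the identity. Choosing an order-$p$ element $\tau$ supported on a pair of generators disjoint from $\{x_1,x_2\}$, the element $\rho$ lies in the centralizer of $\tau$ and hence, by the inductive step just described, acts trivially on $M^{\tau}$. Letting $\tau$ range over a conjugacy class of such elements, $\rho$ is the identity on the union of the corresponding fixed sets; once this union is shown to be dense in $M$, continuity gives $\rho=\mathrm{id}$ on $M$, the conjugacy-plus-generation remark finishes the proof, and the statement for $\mathrm{SL}_n(\mathbb Z)$ follows through the surjection $\phi$.

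The main obstacle is exactly this last propagation in the purely $C^0$ setting. Because $\rho$ has infinite order, Newman's theorem does not apply to it, so merely fixing an open set would be insufficient; one genuinely needs $\rho$ to be the identity on a dense subset, i.e.\ that the fixed sets $M^{\tau}$ of the relevant $p$-torsion sweep out a dense part of $M$. Establishing this density—equivalently, ruling out an open region on which every such torsion element acts freely—is the heart of the matter, and it is where the abundance of commuting finite subgroups guaranteed by $n>r+1$, together with the Euler-characteristic congruences applied to the complementary region, must be exploited, since no tangent or linear data are available near the fixed points to force local triviality directly.
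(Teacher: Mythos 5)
Your proposal has a genuine gap, and it is the one you name yourself at the end: the passage from ``$\rho$ acts trivially on each fixed set $M^{\tau}$'' to ``$\rho$ acts trivially on $M$.'' Nothing in your argument shows the union of the fixed sets of the relevant torsion elements is dense; a priori a purely topological action could confine all of that torsion's fixed-point behaviour to a proper closed subset, and since $\rho$ has infinite order no Newman-type local rigidity applies, exactly as you observe. The paper never faces this problem because it does not localize at all. It uses the Bridson--Vogtmann alternatives (Lemma \ref{lem10} and Lemma \ref{lemlast}), which convert \emph{non-injectivity of $f$ on specific finite subgroups} ($T\cong(\mathbb{Z}/3)^{[n/2]}$ and $SN\cong(\mathbb{Z}/2)^{n-1}$) directly into \emph{global} triviality of $f$ as a group homomorphism. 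The contradiction then runs at the level of ranks: a nontrivial $f$ forces $\mathrm{Homeo}(M)$ (resp. $\mathrm{Homeo}_{+}(M)$) to contain $(\mathbb{Z}/3)^{[n/2]}$, or $(\mathbb{Z}/2)^{n-1}$ (also in the $\mathrm{SL}(n,\mathbb{Z}/2)$ quotient case, via the elementary matrices $x_{1i}(1)$), and Theorem \ref{prop} --- proved from the Mann--Su rank bounds (Lemma \ref{lemm1}, Lemma \ref{lem3}) together with the stratified Euler-characteristic count of Theorem \ref{preprop} --- yields $3\mid\chi(M)$ or $2\mid\chi(M)$, contradicting the hypothesis. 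No induction on dimension and no density statement are needed; your shared starting point (torsion plus Euler-characteristic congruences, with $p=2$ available only in the orientable case) is the same, but the paper's closing mechanism is algebraic rather than dynamical.

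Even setting the density step aside, your induction does not close on its own terms. For $C^{0}$ actions, Smith theory gives that $M^{\tau}$ is only a cohomology manifold over $\mathbb{Z}/p$: it need not be a topological manifold (so the inductive statement as you formulated it, and the base case via Witte, do not apply to it), it need not be connected (and $\mathrm{SAut}(F_{n-2})$ can permute components, since it has finite quotients such as $\mathrm{SL}_{n-2}(\mathbb{Z}/2)$, while the congruence controls only the total $\chi$ of the fixed set, not that of each component), and for $p=2$ it need not be orientable even under an orientation-preserving action --- complex conjugation on $\mathbb{C}P^{2}$ is orientation-preserving with fixed set $\mathbb{R}P^{2}$. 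So in your orientable branch the induction can land on a non-orientable fixed set about which you know only a mod $2$ congruence, whereas your theorem's non-orientable branch requires a mod $3$ condition: the hypotheses do not propagate. Finally, the congruence $\chi(M^{P})\equiv\chi(M)\bmod p$ cannot be justified for homeomorphism actions by ``non-fixed cells fall into free $p$-orbits'' --- there is no equivariant cell structure, and $M$ is not assumed compact or triangulable equivariantly; the correct tool is Smith theory with compact supports and finite-dimensionality hypotheses, which is precisely what Lemma \ref{lem4} and Theorem \ref{preprop} supply in the paper.
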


Since any group action of $\mathrm{SL}_{n}(\mathbb{Z})$ could be lifted to
an action of $\mathrm{SAut}(F_{n}),$ Theorem \ref{main} confirms Conjecture %
\ref{conj} for orientable manifolds with nonvanishing Euler characteristic
modulo $6$.

\begin{remark}
\textit{(i) The bound of }$n$\textit{\ cannot be improved, since }$\mathrm{%
SAut}(F_{n})$\textit{\ acts through }$\mathrm{SL}_{n}(\mathbb{Z})$ \textit{%
non-trivially on }$S^{n-1}.$

\textit{(ii) Belolipetsky and Lubotzky \cite{bl} prove that for any finite
group }$G$\textit{\ and any dimension }$r\geq 2$\textit{, there exists a
hyperbolic manifold }$M^{r}$\textit{\ such that }$\mathrm{Isom}(M)\cong G.$%
\textit{\ Therefore, }$\mathrm{SL}_{n}(\mathbb{Z})$ and thus\textit{\ }$%
\mathrm{SAut}(F_{n})$ \textit{could act nontrivially through a finite
quotient group on such a hyperbolic manifold. This implies that the
condition of Euler characteristic could not be dropped.}

(iii) To satisfy the assumption on the Euler characteristic, the dimension $%
r $ has to be even. There are however no further restrictions on $r$, as the
following example shows. Let $\{g_{i}\}$ be a sequence of nonnegative
integers with $g_{i}\not\equiv 1\func{mod}3$ and $\Sigma _{g_{i}}$ an
orientable surface of genus $g_{i}.$ For any even number $r,$ 
\begin{equation*}
M^{r}=\Sigma _{g_{1}}\times \Sigma _{g_{2}}\times \cdots \times \Sigma _{g_{%
\frac{r}{2}}}
\end{equation*}%
has nonzero (modulo $6$) Euler characteristic and thus satisfies the
condition of Theorem \ref{main}.
\end{remark}

Our proof of Theorem \ref{main} rely on torsion and so will not be
applicable to finite-index subgroups$.$ Actually, Theorem \ref{main} does
not hold for general finite-index subgroups. For example, let $q<\mathbb{Z}$
be a non-trivial ideal and $C$ a non-trivial cyclic subgroup of $\mathrm{SL}%
_{n}(\mathbb{Z}/q).$ Let $f:\mathrm{SAut}(F_{n})\overset{\phi }{\rightarrow }%
\mathrm{SL}_{n}(\mathbb{Z})\rightarrow \mathrm{SL}_{n}(\mathbb{Z}/q)$ be the
group homomorphism induced by quotient ring homomorphism. The group $%
f^{-1}(C)$ could act non-trivially on $S^{2}$ by rotations through $C.$ By a
profound result of Grunewald and Lubotzky \cite{gl} (Corollary 1.2), there
is a group homomorphism $\rho $ from a finite-index subgroup $G$ of $\mathrm{%
SAut}(F_{n})$ $(n\geq 3)$ to $\mathrm{SL}_{n-1}(\mathbb{Z})$ such that $%
\func{Im}f$ is of finite index. Therefore, the group $G$ could act through $%
\mathrm{SL}_{n-1}(\mathbb{Z})$ on $S^{n-2}$, which is an infinite-group
action.

\section{Proofs}

The cohomology $n$-manifold mod $p$ (a prime) considered in this article
will be as in Borel \cite{Bo}. Roughly speaking, a cohomology $n$-manifold
mod $p$ is a locally compact Hausdorff space which has a local cohomology
structure (with coefficient group $\mathbb{Z}/p$) resembling that of
Euclidean $n$-space. Let $L=\mathbb{Z}$ or $\mathbb{Z}/p.$ All homology
groups in this section are Borel-Moore homology groups with compact supports
and coefficients in a sheaf $\mathcal{A}$ of modules over $L$. The homology
groups of $X$ are denoted by $H_{\ast }^{c}(X;\mathcal{A})$ and the
Alexander-Spanier cohomology groups (with coefficients in $L$ and compact
supports) are denoted by $H_{c}^{\ast }(X;L).$ We define the cohomology
dimension $\dim _{L}X=\min \{n\mid H_{c}^{n+1}(U;L)=0$ for all open $%
U\subset X\}.$ If $L=\mathbb{Z}/p,$ we write $\dim _{p}X$ for $\dim _{L}X.$
For integer $k\geq 0,$ let $\mathcal{O}_{k}$ denote the sheaf associated to
the pre-sheaf $U\longmapsto H_{k}^{c}(X,X\backslash U;L).$ An $n$%
-dimensional homology manifold over $L$ (denoted $n$-hm$_{L}$) is a locally
compact Hausdorff space $X$ with $\dim _{L}X<+\infty $, and $\mathcal{O}%
_{k}(X;L)=0$ for $p\neq n$ and $\mathcal{O}_{n}(X;L)$ is locally constant
with stalks isomorphic to $L$. The sheaf $\mathcal{O}_{n}$ is called the
orientation sheaf. There is a similar notion of cohomology manifold over $L$%
, denoted $n$-cm$_{L}$ (cf. \cite{b}, p.373). Topological manifolds are
(co)homology manifolds over $L.$

In order to prove Theorem \ref{main}, we need several lemmas.

\begin{lemma}[Borel \protect\cite{Bo}, Theorem 4.3, p.182 ]
\label{borel}Let $G$ be an elementary $p$-group operating on a first
countable cohomology $n$-manifold $X$ mod $p.$ Let $x\in X$ be a fixed point
of $G$ on $X$ and let $n(H)$ be the cohomology dimension mod $p$ of the
component of $x$ in the fixed point set of a subgroup $H$ of $G.$ If $%
r=n(G), $ we have 
\begin{equation*}
n-r=\sum_{H}(n(H)-r)
\end{equation*}%
where $H\ $runs through the subgroups of $G$ of index $p.$
\end{lemma}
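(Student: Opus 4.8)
The plan is to reduce the identity to a dimension count in a linear model and then justify that reduction cohomologically; this is essentially Borel's argument, and I only sketch its shape. Since $n(G)$ and $n(H)$ depend only on the component $F(H)$ of $x$ in $X^{H}$, I would first pass to a small invariant neighbourhood of $x$, so that there is no harm in working locally at $x$. The model to keep in mind is a linear action of $G=(\mathbb{Z}/p)^{k}$ on $\mathbb{R}^{n}$: writing $\widehat{G}=\mathrm{Hom}(G,\mathbb{F}_{p})$, decomposing $\mathbb{R}^{n}=\bigoplus_{\chi\in\widehat{G}}V_{\chi}$ into isotypic summands and setting $d_{\chi}=\dim V_{\chi}$, one has $X^{H}=\bigoplus_{H\subseteq\ker\chi}V_{\chi}$, so $n(H)=\sum_{H\subseteq\ker\chi}d_{\chi}$; in particular, with $\chi_{0}$ the trivial character, $n(G)=d_{\chi_{0}}=r$ and $n=\sum_{\chi}d_{\chi}$. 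A nontrivial $\chi$ has $\ker\chi$ of index exactly $p$, and each index-$p$ subgroup arises as the kernel of exactly $p-1$ nontrivial characters; hence $n(H)-r=\sum_{\chi\ne\chi_{0},\ \ker\chi=H}d_{\chi}$, and summing over the index-$p$ subgroups $H$ gives $\sum_{\chi\ne\chi_{0}}d_{\chi}=n-r$, which is exactly the claimed formula.

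The real work is to carry out this count when no linear model is available, i.e. to produce integers $d_{\chi}\ge 0$ ($\chi\in\widehat{G}$) with $d_{\chi_{0}}=r$ and $n(H)=\sum_{H\subseteq\ker\chi}d_{\chi}$ for every subgroup $H\le G$; granting these, the combinatorics above finishes the proof. Here I would use Smith theory: for a $\mathbb{Z}/p$-action on a cohomology $n$-manifold mod $p$ the fixed set is again a cohomology manifold mod $p$ (of dimension $\le n$, and $\equiv n\bmod 2$ when $p$ is odd), so $F(C)$ is a cohomology manifold for any order-$p$ subgroup $C$ and, iterating over a chain of such subgroups, so is $F(H)$ for every $H$. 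Morally the multiplicity statement is then an induction on the rank $k$: for $k=1$ the only index-$p$ subgroup is trivial and the identity reads $n-r=n-r$; for $k\ge 2$, fixing an order-$p$ subgroup $C$ and applying the rank-$(k-1)$ multiplicity statement to the action of $G/C$ on $F(C)$ accounts for all subgroups containing $C$, and letting $C$ range over all order-$p$ subgroups (every nontrivial $\chi$ is trivial on some such $C$) would assemble the full family $\{d_{\chi}\}$ — but the compatibility across different choices of $C$ is not formal, and it is where the genuine input must enter.

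That compatibility check is the main obstacle. Smith theory on its own yields only the dimension inequalities $n(H)\le n$ and the manifold property of fixed sets, whereas the lemma is an exact additive equality; to get it one must control the $\mathbb{F}_{p}[G]$-module structure of the local cohomology of $X$ at $x$ and show that this ``virtual local representation'' restricts compatibly to all subgroups simultaneously. This has no shortcut through an honest linear representation and is handled with the spectral sequence of the Borel construction $EG\times_{G}X\to BG$ (equivalently, the localization theorem for $(\mathbb{Z}/p)^{k}$-equivariant mod-$p$ cohomology applied to a small invariant ball at $x$), together with the structure of $H^{*}(B(\mathbb{Z}/p)^{k};\mathbb{F}_{p})$ — where, for $p$ odd, the exterior generators need extra care and the vanishing of the naive local Euler class when $r>0$ forces one to work with the ``normal'' part of the local representation rather than the whole of it.
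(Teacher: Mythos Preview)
The paper does not prove this lemma at all: it is quoted verbatim as Borel's Theorem~4.3 (p.~182 of \cite{Bo}) and used as a black box, so there is no ``paper's own proof'' to compare your attempt against. Your sketch is a faithful outline of the ideas behind Borel's original argument --- the linear model for intuition, Smith theory to know that fixed sets of subgroups are again cohomology manifolds mod $p$, and the Borel spectral sequence / localization theorem to extract the additive dimension identity --- and you are honest that the compatibility step is where the real work lies and that you are not filling it in. For the purposes of this paper that is more than enough: the lemma is a citation, not a contribution, and nothing beyond the statement is needed downstream.
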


\begin{lemma}[Mann and Su \protect\cite{ms}, Theorem 2.2]
\label{lemm1}Let $G$ be an elementary $p$-group of rank $k$ operating
effectively on a first countable connected cohomology $r$-manifold $X$ mod $%
p $. Suppose $\mathrm{dim}_{p}F(G)=r_{0}\geq 0$ where $F(G)$ is the fixed
point set of $G$ on $X$. Then $k\leq \frac{r-r_{0}}{2}$ if $p\neq 2$ and $%
k\leq r-r_{0}$ if $p=2.$
\end{lemma}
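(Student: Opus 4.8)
The plan is to localize at a fixed point and read off the rank bound from a ``weight system'' that Borel's formula (Lemma \ref{borel}) attaches to that point. Since $r_{0}=\dim_{p}F(G)\geq 0$, the set $F(G)$ is nonempty; I would choose $x\in F(G)$ lying in a component of $F(G)$ of cohomology dimension $r_{0}$, so that $n(G)=r_{0}$ in the notation of Lemma \ref{borel}. For each subgroup $H\leq G$ write $c(H)=r-n(H)\geq 0$ for the codimension at $x$ of the fixed set of $H$; thus $c(\{1\})=0$ and $c(G)=r-r_{0}$, and Lemma \ref{borel} becomes the identity $\sum_{[G:H]=p}\bigl(c(G)-c(H)\bigr)=c(G)$, summed over the $\tfrac{p^{k}-1}{p-1}$ subgroups of index $p$.

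The key step is to show that $c$ is a representation-theoretic function. By Smith theory the local cohomological structure of $X$ at the fixed point $x$ behaves like a real representation of $G$, so there should be a finite multiset of nonzero characters $\chi_{1},\dots ,\chi_{m}\in \mathrm{Hom}(G,\mathbb{Z}/p)$ with
\begin{equation*}
c(H)=d\cdot \#\{\,i:H\not\leq \ker \chi_{i}\,\}\qquad \text{for every }H\leq G,
\end{equation*}
where $d=2$ if $p$ is odd and $d=1$ if $p=2$ (the nontrivial irreducible real representations of $\mathbb{Z}/p$ being $2$-dimensional for $p$ odd and $1$-dimensional for $p=2$). I would establish this by induction on $k$ using Lemma \ref{borel}: the base case $k=1$ is the classical statement that $F(\mathbb{Z}/p)$ is a mod-$p$ cohomology manifold whose codimension is even when $p$ is odd, and the inductive step assembles the weight systems on the index-$p$ subgroups $H\cong (\mathbb{Z}/p)^{k-1}$ into a consistent system on $G$, the displayed identity above being exactly the compatibility condition that such a weight system satisfies. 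Evaluating at $H=G$ then gives $r-r_{0}=c(G)=d\,m$.

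It remains to bound $m$ from below by $k$, and here I would use effectiveness together with connectedness of $X$. If $g\in \bigcap_{i}\ker \chi_{i}$, then $c(\langle g\rangle )=0$, i.e. $n(\langle g\rangle )=r$, so the component of $x$ in $F(\langle g\rangle )$ is open; as fixed-point sets are closed and $X$ is connected, this forces $F(\langle g\rangle )=X$, whence $g$ acts trivially and $g=1$ by effectiveness. Therefore $\bigcap_{i}\ker \chi_{i}=\{1\}$. Since each $\ker \chi_{i}$ is a hyperplane in $G\cong \mathbb{F}_{p}^{\,k}$, their intersection is trivial only if the characters $\chi_{1},\dots ,\chi_{m}$ span the dual space $\mathrm{Hom}(G,\mathbb{Z}/p)$, which requires $m\geq k$. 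Combining this with $r-r_{0}=d\,m$ yields $k\leq m=\tfrac{r-r_{0}}{2}$ for $p$ odd and $k\leq m=r-r_{0}$ for $p=2$, as claimed.

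The main obstacle is the second paragraph: upgrading the numerical relation of Lemma \ref{borel} to the genuine weight-system description of $c$. The content there is Smith-theoretic — one must know that the fixed-point codimensions of the family of subgroups of an elementary abelian $p$-group acting on a cohomology manifold are exactly those of a linear representation, and for $p$ odd one must feed in the even-codimension parity theorem to see that the weights occur in $2$-dimensional blocks. By contrast, the local-to-global step in the third paragraph is routine, though it genuinely needs the connectedness hypothesis: without it a nontrivial element could fix an entire component while moving others, and the spanning argument would break down.
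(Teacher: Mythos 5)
The paper does not actually prove this lemma: it is imported verbatim from Mann and Su \cite{ms} (their Theorem 2.2), so there is no internal proof to compare yours against. Judged on its own terms, your outline is close in spirit to the genuine argument, which indeed rests on Borel's formula plus Smith theory, and your third paragraph is correct as written: if $g\neq 1$ had $c(\langle g\rangle)=0$, the component of $x$ in $\mathrm{Fix}(g)$ would be an $r$-dimensional cohomology manifold mod $p$ inside $X$, hence open by invariance of domain (\cite{b}, Cor.~16.19); being also closed in the connected space $X$ it would be all of $X$, contradicting effectiveness. The spanning argument for the characters and the final count $k\leq m=(r-r_{0})/d$ are fine.

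The genuine gap is in your second paragraph, and it is not quite where you located it. You assert that the displayed identity --- Borel's formula for $G$ acting on $X$, i.e.\ for the single pair $\{1\}\leq G$ --- is ``exactly the compatibility condition'' allowing you to assemble the weight systems of the index-$p$ subgroups. That cannot work for $k\geq 3$: the weight-system description of $c$ imposes one linear relation for \emph{every} subgroup $K\leq G$, while the top-level identity is a single relation and says nothing about $c$ on, say, the cyclic subgroups. The missing ingredient is the \emph{relative} form of Borel's formula: by Smith theory (\cite{Bo}, III, Theorem 2.2) the component of $x$ in $F(K)$ is again a first countable cohomology manifold mod $p$, on which $G/K$ acts with fixed point $x$, so Lemma \ref{borel} applied to that induced action gives, for every $K\leq G$,
\begin{equation*}
c(G)-c(K)=\sum_{K\leq W,\ [G:W]=p}\bigl(c(G)-c(W)\bigr).
\end{equation*}
With this family of identities no inductive gluing is needed at all: the multiplicities are forced, namely $m_{W}:=\bigl(c(G)-c(W)\bigr)/d$ for each index-$p$ subgroup $W$ (nonnegative since the component of $x$ in $F(G)$ sits inside that of $F(W)$, and an integer for odd $p$ by the even-codimension theorem applied to the cyclic action of $G/W$ on the component of $x$ in $F(W)$ --- this is where the parity input enters, not in a gluing step), and the relative identity at $K$ \emph{is} the claimed formula $c(K)=d\cdot\#\{i:K\not\leq\ker\chi_{i}\}$. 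So your plan is reparable and yields a complete proof, but as stated the central step would fail; the repair is to apply Lemma \ref{borel} not only to $G$ acting on $X$ but to all the induced actions of quotients $G/K$ on fixed-set components, which is exactly how the Smith-theoretic structure you gesture at is made precise.
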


Let $X$ be an oriented cohomology $r$-manifold $X$ over $\mathbb{Z}$ (in the
sense of Bredon \cite{bre2}). A homeomorphism $f:X\rightarrow X$ is
orientation-preserving if the orientation is preserved. In the following
lemma, we consider the case of orientation-preserving actions.

\begin{lemma}
\label{lem3}Let $G$ be a non-trivial elementary $2$-group of rank $k$
operating effectively on a first countable connected oriented cohomology $r$%
-manifold $X$ over $\mathbb{Z}$ by orientation-preserving homeomorphisms.
Suppose $\mathrm{dim}_{2}F(G)=r_{0}\geq 0$ where $F(G)$ is the fixed point
set of $G$ on $X$. Then $k\leq r-1-r_{0}.$
\end{lemma}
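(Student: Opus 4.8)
The strategy is to start from the Mann–Su bound of Lemma \ref{lemm1}, which already gives $k \le r - r_0$ for $p = 2$, and then show that equality $k = r - r_0$ is incompatible with the action being orientation-preserving; shaving off that one unit yields $k \le r - 1 - r_0$. First I would dispose of the boundary-case structure: since $G$ is a nontrivial elementary $2$-group acting effectively, pick a subgroup $H \le G$ of index $2$ and set $F = F(H)$, a cohomology manifold over $\mathbb{Z}/2$ of some dimension; the quotient $G/H \cong \mathbb{Z}/2$ acts on $F$ with fixed point set $F(G)$. The plan is to induct on $k$, so the crux reduces to the rank-one case: a single involution $\tau$ acting effectively and orientation-preservingly on a connected oriented cohomology $r$-manifold $X/\mathbb{Z}$ must have $\dim_2 F(\langle\tau\rangle) \le r - 2$, not merely $\le r - 1$.

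The main obstacle — and the heart of the argument — is exactly this rank-one claim: ruling out a codimension-one fixed point set for an orientation-preserving involution. The geometric picture is that near a codimension-one fixed component the involution would have to act as a reflection, which is orientation-reversing, contradicting the hypothesis. To make this precise over a cohomology manifold I would use Smith theory together with the Borel formula (Lemma \ref{borel}) applied to $G' = \langle\tau\rangle \cong \mathbb{Z}/2$: if $n(G') = r - 1$ were the dimension of a fixed component, the formula relates $r - n(G')$ to local data and, combined with the exact sequences of Smith theory (the sequences relating $H^*_c(X;\mathbb{Z}/2)$, $H^*_c(F;\mathbb{Z}/2)$ and the cohomology of the quotient), forces $\tau$ to act on the orientation sheaf $\mathcal{O}_r(X;\mathbb{Z})$ by $-1$ near that component. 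One can detect this via the transfer/Gysin argument: a codimension-one $\mathbb{Z}/2$-fixed set is two-sided (its normal structure is a line bundle over $\mathbb{Z}/2$, necessarily the trivial or the Möbius one), and in the orientable ambient manifold the involution exchanges the two local sheets, reversing orientation — a contradiction. Thus $\dim_2 F(\langle\tau\rangle) \ne r-1$, and since it is $\le r-1$ by Mann–Su, it is $\le r-2$.

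For the inductive step, having the improved rank-one bound I would proceed as follows: given $G$ of rank $k \ge 2$, choose a hyperplane $H$ of rank $k-1$; then $F(H)$ is a cohomology manifold over $\mathbb{Z}/2$ (possibly with several components, but $G/H$ permutes them and we may pass to a $G/H$-invariant component or argue componentwise) of dimension $r' \ge r_0$, and by Lemma \ref{lemm1} applied inside $X$ we have $k - 1 \le \tfrac{r - r'}{2}$... but more directly, induction on $k$ using $F(G) = F(G/H)$ inside $F(H)$: the quotient $\mathbb{Z}/2$ still acts orientation-preservingly on the oriented cohomology manifold $F(H)$ provided we know $F(H)$ inherits an orientation on which $G/H$ acts trivially — this is where I would again invoke that a hyperplane in an orientation-preserving elementary $2$-group action has orientable, orientation-preservingly-acted-upon fixed set, which follows from the rank-one analysis applied to each of the $2$-subgroups of $G/\!\ker$. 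Combining the inductive hypothesis $k - 1 \le \dim_2 F(H) - 1 - r_0$ with the rank-one bound $\dim_2 F(H) \le r - 1$ — applied to a complementary involution acting on... — I would assemble $k \le r - 1 - r_0$. The delicate point throughout is bookkeeping the orientation behaviour on the nested fixed sets; I expect the orientation-transport argument in the rank-one case to be the step requiring the most care, and the rest to be a clean Smith-theoretic induction.
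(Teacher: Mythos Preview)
Your rank-one observation is the correct heart of the matter: an orientation-preserving involution on an oriented cohomology $r$-manifold over $\mathbb{Z}$ has fixed set of even, hence $\ge 2$, codimension. The paper does not re-derive this from orientation sheaves or a Gysin/transfer argument; it simply invokes it as a known consequence of Smith theory over $\mathbb{Z}$ (Bredon in \cite{Bo}, Theorem~2.5, p.~79).

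Where your plan has a genuine gap is the inductive step. Smith theory only guarantees that $F(H)$ is a cohomology manifold over $\mathbb{Z}/2$, not over $\mathbb{Z}$, so the hypotheses of the lemma --- oriented over $\mathbb{Z}$, orientation-preserving action --- are not available on $F(H)$, and you cannot feed $F(H)$ back into the inductive hypothesis. Your assertion that orientability of $F(H)$ ``follows from the rank-one analysis applied to each of the $2$-subgroups'' is not justified: knowing that each involution has even-codimensional fixed set does not, for cohomology manifolds, manufacture an integral orientation on $F(H)$ nor show that the residual $G/H$-action preserves it. Your bookkeeping is also tangled: it is unclear which rank-$(k-1)$ group is meant to act on $F(H)$ (the hyperplane $H$ itself acts trivially there), and if instead you split off a single involution $\tau$ and apply the full inductive hypothesis to the complementary $G_0$ on $F(\tau)$, the combination with $\dim_2 F(\tau)\le r-2$ yields $k\le r-2-r_0$, which is stronger than the sharp bound and hence false. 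Finally, you never verify that the complementary subgroup acts \emph{effectively} on the relevant component of the fixed set.

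The paper sidesteps all of this by using the orientation hypothesis exactly once and never inducting on $k$. It chooses a nontrivial $g\in G$ whose fixed set has \emph{maximal} dimension among all nontrivial elements, writes $G=\langle g\rangle\oplus G_0$, and picks a component $M$ of $\mathrm{Fix}(g)$ containing a top-dimensional component of $F(G)$; the even-codimension fact gives $\dim_2 M\le r-2$. If some nontrivial $h\in G_0$ acted trivially on $M$, then by maximality every nontrivial element of $\langle g,h\rangle\cong(\mathbb{Z}/2)^2$ would have fixed-set dimension exactly $\dim_2 M$ at a point of $M$, and Borel's formula (Lemma~\ref{borel}) forces $r=\dim_2 M$, a contradiction. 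Hence $G_0$ acts effectively on the $\mathbb{Z}/2$-cohomology manifold $M$, and now the \emph{unimproved} Mann--Su bound (Lemma~\ref{lemm1}, which needs no orientation hypothesis) gives $k-1=\mathrm{rank}(G_0)\le \dim_2 M - r_0\le r-2-r_0$, i.e.\ $k\le r-1-r_0$. The point is that the single saving of $1$ comes entirely from the one application of the even-codimension fact to $g$; after that one step the argument lives purely over $\mathbb{Z}/2$.
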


\begin{proof}
Note that the manifold $X$ is also a cohomology $r$-manifold over $\mathbb{Z}%
/2$ and the fixed point set $\mathrm{Fix}(g)$ is a cohomology manifold over $%
\mathbb{Z}/2$ by Smith theory (see \cite{Bo}, Theorem 2.2 and the bottom of
p.78). If there is a non-trivial element $g\in G$ such that the dimension of
the fixed point set $\mathrm{Fix}(g)\ $is $r,$ the element $g$ acts
trivially by invariance of domain (see Bredon \cite{b}, Cor. 16.19, p.383).
This is a contradiction to the assumption that $G$ acts effectively.
Therefore, we could assume that $\mathrm{Fix}(g)$ is of non-trivial even
codimension by Bredon \cite{Bo} (Theorem 2.5, p.79. We use the assumption
that $X$ is a cohomology manifold over $\mathbb{Z}$). Now the lemma becomes
obvious for $r=1.$ When $r=2,$ the dimension of $\mathrm{Fix}(H)$ is zero
for any nontrivial subgroup $H<G.$ If $r=2$ and $k=1,$ the fixed point set $%
\mathrm{Fix}(G)$ is of dimension $0$ and the statement holds. If $r=2$ and $%
k\geq 2,$ this would be impossible by Borel's formula in Lemma \ref{borel}.

Choose a nontrivial element $g\in G$ such that the fixed point set $\mathrm{%
Fix}(g)\ $is of the maximal dimension among all nontrivial elements in $G$.
Fix a connected component $M$ of $\mathrm{Fix}(g)$ containing a connected
component of $F(G)$ with the largest dimension. Choose a decompostion $%
G=\langle g\rangle \bigoplus G_{0}$ for some subgroup $G_{0}<G.$ The action
of the complement $G_{0}$ leaves $M$ invariant. If some nontrivial element $%
h\in G_{0}$ acts trivially on $M,$ let $H=\langle g,h\rangle \cong (\mathbb{Z%
}/2)^{2}.$ By the assumption that the fixed point set $\mathrm{Fix}(g)\ $is
of the maximal dimension, each nontrivial element in $H$ has its fixed point
set of dimension $\dim _{2}\mathrm{Fix}(g).$ This is impossible by Borel's
formula in Lemma \ref{borel}. Therefore, the action of $G_{0}$ on $M$ is
effective. Note that $\mathrm{Fix}(g)$ is a cohomology manifold over $%
\mathbb{Z}/2$ (by Smith theory) of dimension at most $r-2.$ Thus the rank of 
$G_{0}$ is at most $r-2-r_{0}$ by Lemma \ref{lemm1}. Therefore,%
\begin{equation*}
k=\mathrm{rank}(G_{0})+1\leq r-1-r_{0}.
\end{equation*}
\end{proof}

The inequality in Lemma \ref{lem3} is sharp, by considering the linear
action of the diagonal subgroup $(\mathbb{Z}/2)^{n-1}<\mathrm{SL}_{n}(%
\mathbb{Z})$ on $\mathbb{R}^{n}.$

Let $X$ be a locally compact Hausdorff space and a finite group $G=(\mathbb{Z%
}/p)^{n}$ acting on $X$ by homeomorphisms. In the remaining part of this
article, we suppose that the Euler characteristic $\sum_{i}(-1)^{i}\dim
H^{i}(X;\mathbb{Z}/p)=:\chi (X;\mathbb{Z}/p)$ is defined. The following
results are well-known from Smith theory (cf. \cite{Bo}, Theorem 3.2 on page
40 and Theorem 4.4 on page 42-43).

\begin{lemma}
\label{lem4}We have the following.

\begin{enumerate}
\item[(i)] Suppose that the cyclic group $G=\mathbb{Z}/p$ operates freely on 
$X,$ whose $\dim _{\mathbb{Z}}X<\infty $ and $H^{\ast }(X;\mathbb{Z}/p)$ is
finite dimensional. Then $H^{\ast }(X/G;\mathbb{Z}/p)$ is finite-dimensional
and 
\begin{equation*}
\chi (X;\mathbb{Z}/p)=p\chi (X/G;\mathbb{Z}/p).
\end{equation*}

\item[(ii)] Suppose that the cyclic group $\mathbb{Z}/p$ operates on $X,$
whose $\dim _{\mathbb{Z}/p}X<\infty $ and $H^{\ast }(X;\mathbb{Z}/p)$ is
finite dimensional. Let $F$ be the fixed point set$.$ Then $H^{\ast }(F;%
\mathbb{Z}/p),H^{\ast }(X-F;\mathbb{Z}/p)$ are finite dimensional and 
\begin{equation*}
\chi (X;\mathbb{Z}/p)=\chi (X-F;\mathbb{Z}/p)+\chi (F;\mathbb{Z}/p).
\end{equation*}
\end{enumerate}
\end{lemma}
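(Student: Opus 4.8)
The plan is to derive both items from Borel's sheaf-theoretic Smith theory (indeed both are essentially contained in the cited Theorems 3.2 and 4.4); I will describe the mechanism, the only substantive points being the finiteness assertions and the multiplicativity of $\chi$ under the free quotient.

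For (ii): since $\dim_{\mathbb{Z}/p}X<\infty$ and $H^{*}(X;\mathbb{Z}/p)$ is finite dimensional, the elementary Smith inequality $\sum_{i}\dim H^{i}(F;\mathbb{Z}/p)\le\sum_{i}\dim H^{i}(X;\mathbb{Z}/p)$ shows $H^{*}(F;\mathbb{Z}/p)$ is finite dimensional. As $F$ is closed and $X-F$ open in $X$, the long exact sequence $\cdots\to H_{c}^{n}(X-F;\mathbb{Z}/p)\to H_{c}^{n}(X;\mathbb{Z}/p)\to H_{c}^{n}(F;\mathbb{Z}/p)\to H_{c}^{n+1}(X-F;\mathbb{Z}/p)\to\cdots$ then forces $H^{*}(X-F;\mathbb{Z}/p)$ to be finite dimensional as well, and taking the alternating sum of dimensions along this sequence gives $\chi(X;\mathbb{Z}/p)=\chi(X-F;\mathbb{Z}/p)+\chi(F;\mathbb{Z}/p)$.

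For (i): freeness makes $\pi\colon X\to X/G$ a $p$-sheeted (hence proper) covering, so the sheaf $\pi_{*}\underline{\mathbb{Z}/p}$ on $X/G$ is locally constant of rank $p$ and the higher direct images $R^{q}\pi_{*}\underline{\mathbb{Z}/p}$ vanish; the Leray spectral sequence thus collapses to $H_{c}^{*}(X;\mathbb{Z}/p)\cong H_{c}^{*}(X/G;\pi_{*}\underline{\mathbb{Z}/p})$. Finite dimensionality of $H^{*}(X/G;\mathbb{Z}/p)$ follows from $\dim_{\mathbb{Z}}(X/G)<\infty$ together with a Cartan--Leray argument (each $H^{s}(G;H_{c}^{t}(X;\mathbb{Z}/p))$ is finite dimensional), and is in any case part of Borel's Theorem 3.2. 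Now $\mathbb{Z}/p[G]=\mathbb{Z}/p[\mathbb{Z}/p]$ is a local ring with residue field $\mathbb{Z}/p$, hence as a $G$-module it admits a $G$-stable composition series with all quotients the trivial module $\mathbb{Z}/p$; transporting this along the principal $G$-bundle $X\to X/G$ produces a filtration of $\pi_{*}\underline{\mathbb{Z}/p}$ by locally constant subsheaves whose $p$ successive quotients are each $\underline{\mathbb{Z}/p}$. Feeding the corresponding short exact sequences of sheaves into the long exact sequences of compactly supported cohomology and using additivity of $\chi$ (all groups now being finite dimensional) yields $\chi(X;\mathbb{Z}/p)=\chi(X/G;\pi_{*}\underline{\mathbb{Z}/p})=p\,\chi(X/G;\mathbb{Z}/p)$.

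The one thing that needs care, rather than being a genuine obstacle, is that every tool above --- the Smith inequality, the long exact sequence of the pair in compactly supported cohomology, and the Leray and Cartan--Leray spectral sequences --- must be applied in the sheaf-theoretic form valid for locally compact Hausdorff spaces of finite cohomological dimension, which is precisely the setting of Borel's seminar; the cleanest write-up is simply to quote Borel's Theorems 3.2 and 4.4 for the two identities together with the accompanying finiteness statements.
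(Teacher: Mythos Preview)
Your proposal is correct. In fact the paper does not give a proof of this lemma at all: it simply introduces the statement with ``The following results are well-known from Smith theory (cf.\ [Bo], Theorem 3.2 on page 40 and Theorem 4.4 on page 42--43)'' and moves on. Your sketch is a faithful unpacking of exactly those two results from Borel's seminar, and you yourself arrive at the same conclusion in your final sentence---that the cleanest write-up is to quote Borel directly. So there is no discrepancy in approach; you have just supplied the mechanism that the paper leaves to the citation.
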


Denote by $G_{x}$ the stabilizer of $x\in X.$ Suppose that $X=\cup
_{i=0}^{n}X_{i}$ is the union of subspaces $X_{i}=\{x\in X:\mathrm{order}%
(G_{x})=p^{i}\}.$ It is clear that each $X_{i}$ is $G$-invariant and $X_{n}=%
\mathrm{Fix}(G)$.

\begin{theorem}
\label{preprop}\bigskip Suppose that $G\ $is a (not necessarily abelian) $p$%
-group of order $p^{n}$ acting on $X.$ Then 
\begin{equation*}
\chi (X;\mathbb{Z}/p)=\tsum\nolimits_{i=0}^{n}\chi (X_{i};\mathbb{Z}%
/p)=\tsum\nolimits_{i=0}^{n}p^{n-i}a_{i},
\end{equation*}%
for some integers $a_{i}.$ Actually, we have $\chi (X_{i};\mathbb{Z}%
/p)=p^{n-i}a_{i}.$
\end{theorem}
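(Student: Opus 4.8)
The plan is to induct on $n$, the exponent in $|G| = p^n$, using Lemma \ref{lem4}(i)–(ii) to peel off one "level" of the stabilizer stratification at a time. First I would record the base case: if $n = 0$ then $G$ is trivial, $X = X_0 = \mathrm{Fix}(G)$, and the identity $\chi(X;\mathbb{Z}/p) = \chi(X_0;\mathbb{Z}/p) = p^0 a_0$ holds with $a_0 = \chi(X;\mathbb{Z}/p)$. For the inductive step, I would choose a central subgroup $C \cong \mathbb{Z}/p$ of $G$ (every nontrivial $p$-group has nontrivial center, hence a central element of order $p$). The point of centrality is that $C$ acts on each stratum $X_i$, and — more importantly — the fixed-point set $F = \mathrm{Fix}(C)$ is $G$-invariant, so $G/C$ acts on $F$ while $G$ acts on $X \setminus F$ with all stabilizers now of order dividing $p^{n-1}$ once we quotient by $C$.

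The key computation splits $X$ along $F = \mathrm{Fix}(C)$. By Lemma \ref{lem4}(ii),
\begin{equation*}
\chi(X;\mathbb{Z}/p) = \chi(F;\mathbb{Z}/p) + \chi(X \setminus F;\mathbb{Z}/p).
\end{equation*}
On $X \setminus F$, the group $C$ acts freely (a point is fixed by some nontrivial element of the cyclic group $C$ iff it is fixed by all of $C$), so Lemma \ref{lem4}(i) gives $\chi(X \setminus F;\mathbb{Z}/p) = p\,\chi((X\setminus F)/C;\mathbb{Z}/p)$. The residual group $G/C$, of order $p^{n-1}$, acts on $(X \setminus F)/C$, and I would check that the stabilizer stratification of this action matches the restriction to $X \setminus F$ of the stratification of $X$ by $G$: a point $Cx \in (X\setminus F)/C$ has $(G/C)$-stabilizer of order $p^{j}$ precisely when $x$ has $G$-stabilizer of order $p^{j+1}$ (the stabilizer in $G$ contains $C$ after... no — one must be slightly careful here; the stabilizer $(G/C)_{Cx}$ is the image of $\{g : gx \in Cx\}$, and since $C$ acts freely off $F$ this set equals $G_x \cdot C$, whose image in $G/C$ has order $|G_x C|/p$). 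So the $i$-th stratum downstairs corresponds to the $(i+1)$-st stratum of $X$ inside $X \setminus F$, namely $X_{i+1}$ for $i = 0, \dots, n-1$ — note $X_{n+1}$ is empty and $X_0 \subset F$, consistent with the indexing. Applying the inductive hypothesis to $(X\setminus F)/C$ with the $G/C$-action yields $\chi((X\setminus F)/C;\mathbb{Z}/p) = \sum_{i=0}^{n-1} p^{(n-1)-i} b_i$ with $\chi$ of the $i$-th stratum equal to $p^{(n-1)-i} b_i$; pulling back through the free quotient multiplies each by $p$, giving $\chi(X_{i+1} ;\mathbb{Z}/p) = p^{n-i} b_i = p^{n-(i+1)}(p b_i)$, i.e. $\chi(X_j;\mathbb{Z}/p) = p^{n-j} a_j$ with $a_j := p b_{j-1}$ for $j \geq 1$. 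Finally, $F = \mathrm{Fix}(C)$ need not be all of $\mathrm{Fix}(G)$, but $G$ acts on it; I would handle $\chi(F;\mathbb{Z}/p)$ either by a secondary induction (on $\dim_{\mathbb{Z}/p}$, or on the order of the kernel of $G \to \mathrm{Homeo}(F)$) or, more cleanly, by noting that on $F$ the subgroup $C$ acts trivially so $G$ acts through $G/C$, and the stratum $X_i \cap F$ is exactly the locus in $F$ where the $G/C$-stabilizer has order $p^{i}$ for $i \geq 1$ while $X_0 \cap F = \varnothing$; re-running the split on $F$ with $G/C$ and reconciling the bookkeeping gives $\chi(X_i \cap F;\mathbb{Z}/p) = p^{n-i} c_i$, and adding contributions from $F$ and $X\setminus F$ stratum-by-stratum produces the claimed $a_i$.

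The main obstacle I anticipate is the finiteness bookkeeping — ensuring at each stage that the hypotheses of Lemma \ref{lem4} actually hold for the new space, i.e. that $H^*((X\setminus F)/C;\mathbb{Z}/p)$ and $H^*(F;\mathbb{Z}/p)$ remain finite-dimensional and that the relevant cohomological dimensions stay finite, so that all the Euler characteristics in sight are well-defined. Lemma \ref{lem4} itself asserts exactly these finiteness conclusions, so the induction is self-sustaining, but I would state this carefully. A secondary subtlety is the stabilizer-index computation under the quotient map $X \setminus F \to (X\setminus F)/C$: one must verify $|G_x C| = p \cdot |G_{Cx}/C \text{ lifted}|$ honestly, using that $C \not\subset G_x$ on $X \setminus F$, so that $|G_x C| = p|G_x|$ and the image in $G/C$ has order $|G_x|$ — this is what produces the exact shift $X_{i+1} \leftrightarrow (\text{stratum } i)$ rather than a messier correspondence. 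Once the indexing is pinned down, the formula $\chi(X;\mathbb{Z}/p) = \sum_i p^{n-i} a_i$ follows by summing the per-stratum identities, and the refined claim $\chi(X_i;\mathbb{Z}/p) = p^{n-i} a_i$ is exactly what the induction delivers.
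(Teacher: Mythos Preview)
Your approach is exactly the paper's: induct on $n$, pick a central order-$p$ subgroup $C$, split $X$ into $F=\mathrm{Fix}(C)$ and its complement via Lemma~\ref{lem4}(ii), use the free $C$-action on $X\setminus F$ together with Lemma~\ref{lem4}(i), and apply the inductive hypothesis to the $G/C$-actions on $(X\setminus F)/C$ and on $F$.

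However, your stratification bookkeeping has the index shift on the wrong piece. On $X\setminus F$ you have $C\cap G_x=\{1\}$, so $|G_xC|=p\,|G_x|$ and the stabilizer $(G/C)_{Cx}=G_xC/C$ has order exactly $|G_x|$; hence the $i$-th stratum $Y_i$ of $(X\setminus F)/C$ pulls back to $X_i\cap(X\setminus F)$, \emph{not} to $X_{i+1}$ (and in particular $X_0\subset X\setminus F$, contrary to what you wrote). Conversely, on $F$ you have $C\subset G_x$, so $(G/C)_x=G_x/C$ has order $|G_x|/p$; thus $X_i\cap F$ is the locus where the $G/C$-stabilizer has order $p^{i-1}$, not $p^{i}$. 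The paper records this succinctly as $X_i=q^{-1}(Y_i)\cup Z_{i-1}$. With your swapped indexing the $F$-contribution would give $\chi(X_i\cap F)$ divisible only by $p^{\,n-1-i}$, one power of $p$ short of what is needed; it is precisely the shift on the $F$ side that makes the induction close. Once the indices are corrected, your argument coincides with the paper's.
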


\begin{proof}
We prove the theorem by induction on $n.$ When $n=0,$ the statement is
trivial by the assumption that the Euler characteristic $\chi (X;\mathbb{Z}%
/p)$ is defined. When $n=1,$ this is Lemma \ref{lem4} by noting that $%
F=X_{1} $ and $X_{0}=X-F$. Choose $a$ to be an order-$p$ element in the
center of $G. $ Let $F=\mathrm{Fix}(a)$ and $X_{0}=X-F$. The quotient group $%
G/\langle a\rangle $ acts on the quotient space $X_{0}/\langle a\rangle $
and $F.$ Denote by 
\begin{equation*}
Y_{i}=\{x\in (X-F)/\langle a\rangle :|(G/\langle a\rangle )_{x}|=p^{i}\}
\end{equation*}%
and 
\begin{equation*}
Z_{i}=\{x\in F:|(G/\langle a\rangle )_{x}|=p^{i}\}.
\end{equation*}%
We will denote $\chi (X;\mathbb{Z}_{p})$ by $\chi (X)$ for short. By the
induction step, we have that 
\begin{equation*}
\chi ((X-F)/\langle a\rangle )=\sum_{i=0}^{n-1}\chi
(Y_{i})=\sum_{i=0}^{n-1}p^{n-1-i}a_{i}^{\prime }
\end{equation*}%
and 
\begin{equation*}
\chi (F)=\sum_{i=0}^{n-1}\chi (Z_{i})=\sum_{i=0}^{n-1}p^{n-1-i}b_{i}.
\end{equation*}%
The first equality in the statement of the theorem is proved by noting that $%
X_{i}=q^{-1}(Y_{i})\cup Z_{i-1}$ with the convention that $Z_{-1}=\emptyset
, $ where $q:(X-F)\rightarrow (X-F)/\langle a\rangle $ is the projection.
Therefore, we have 
\begin{eqnarray*}
\chi (X) &=&\chi (X-F)+\chi (F) \\
&=&p\chi ((X-F)/\langle a\rangle )+\chi (F) \\
&=&p^{n}a_{0}^{\prime }+\sum_{i=1}^{n-1}p^{n-i}(a_{i}^{\prime
}+b_{i-1})+b_{n-1}.
\end{eqnarray*}%
The proof is finished by choosing $a_{0}=a_{0}^{\prime },a_{i}=$ $%
a_{i}^{\prime }+b_{i-1}$ for $1\leq i\leq n-1$ and $a_{n}=b_{n-1}.$ The last
statement that $\chi (X_{i};\mathbb{Z}/p)=p^{n-i}a_{i}$ could be proved by
noting $X_{i}=q^{-1}(Y_{i})\cup Z_{i-1}$ and a similar induction argument.
\end{proof}

\bigskip

For a group $G$ and a prime $p,$ let the $p$-rank be $\mathrm{rk}%
_{p}(G)=\sup \{k\mid (\mathbb{Z}/p)^{k}\hookrightarrow G\}$. It is possible
that $\mathrm{rk}_{p}(G)=+\infty .$

\begin{theorem}
\label{prop}Let $M^{r}$ be a first countable connected cohomology $r$%
-manifold over $\mathbb{Z}/p$ and $\mathrm{Homeo}(M)$ the group of
self-homeomorphisms. We adapt the convention that $p^{n}=1$ when $n<0.$ Then
the $p$-rank satisfies%
\begin{equation*}
p^{\mathrm{rk}_{p}(\mathrm{Homeo}(M))-[\frac{r}{2}]}\mid \chi (M;\mathbb{Z}%
/p)
\end{equation*}%
when $p$ is odd and 
\begin{equation*}
2^{\mathrm{rk}_{2}(\mathrm{Homeo}(M))-r}\mid \chi (M;\mathbb{Z}/2)
\end{equation*}%
when $p=2.$ If $M^{r}$ $(r\geq 1)$ is an oriented connected cohomology $r$%
-manifold over $\mathbb{Z}$ and $\mathrm{Homeo}_{+}(M)$ is the group of
orientation-preserving self-homeomorphisms, we have 
\begin{equation*}
2^{\mathrm{rk}_{2}(\mathrm{Homeo}(M))-r+1}\mid \chi (M;\mathbb{Z}/2).
\end{equation*}
\end{theorem}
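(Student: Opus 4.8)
The plan is to compute $\chi(M;\mathbb{Z}/p)$ using the stabilizer stratification of Theorem \ref{preprop} applied to a maximal elementary abelian $p$-subgroup of the relevant homeomorphism group, and then to invoke the torsion rank bounds (Lemma \ref{lemm1}, and Lemma \ref{lem3} in the oriented case) to show that the strata with large stabilizer are empty.

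Put $k=\mathrm{rk}_{p}(\mathrm{Homeo}(M))$ for the first two assertions and $k=\mathrm{rk}_{2}(\mathrm{Homeo}_{+}(M))$ for the last, and assume first that $k<\infty$. Fix an effective action of $G=(\mathbb{Z}/p)^{k}$ on $M$, by orientation-preserving homeomorphisms in the oriented case, and write $M=\bigcup_{i=0}^{k}M_{i}$ with $M_{i}=\{x\in M:\mathrm{order}(G_{x})=p^{i}\}$. Since $M$ is a cohomology $r$-manifold over $\mathbb{Z}/p$ (resp. over $\mathbb{Z}$), so that $\dim_{\mathbb{Z}/p}M<\infty$ (resp. $\dim_{\mathbb{Z}}M<\infty$), and $\chi(M;\mathbb{Z}/p)$ is defined by the standing assumption, Theorem \ref{preprop} applies and gives integers $a_{0},\dots,a_{k}$ with
\[
\chi(M;\mathbb{Z}/p)=\sum_{i=0}^{k}\chi(M_{i};\mathbb{Z}/p)=\sum_{i=0}^{k}p^{k-i}a_{i},\qquad \chi(M_{i};\mathbb{Z}/p)=p^{k-i}a_{i}.
\]

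The crucial step is to bound the index of the top non-empty stratum. If $x\in M_{i}$ with $i\geq 1$, then $G_{x}\leq G$ is elementary abelian of rank $i$, it acts effectively on the connected cohomology $r$-manifold $M$ (because $G$ does), and $x\in F(G_{x})$, so $\dim_{p}F(G_{x})\geq 0$. By Lemma \ref{lemm1}, $i\leq[\frac{r}{2}]$ if $p$ is odd and $i\leq r$ if $p=2$; and in the oriented case $G_{x}$ acts effectively on the oriented cohomology $r$-manifold $M$ over $\mathbb{Z}$ by orientation-preserving homeomorphisms, so Lemma \ref{lem3} gives $i\leq r-1$. Hence $M_{i}=\emptyset$, so $a_{i}=0$, for every $i>m$, where $m=[\frac{r}{2}]$, resp. $m=r$, resp. $m=r-1$. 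Therefore $\chi(M;\mathbb{Z}/p)=\sum_{i=0}^{\min(k,m)}p^{k-i}a_{i}$, and every term on the right is divisible by $p^{k-m}$ — which is $1$ by the stated convention when $k\leq m$, in which case the assertion is vacuous. This is exactly $p^{k-m}\mid\chi(M;\mathbb{Z}/p)$, i.e. the three displayed divisibilities. Finally, if $k=+\infty$ one runs the same argument with finite elementary abelian subgroups of arbitrarily large rank, obtaining $p^{N}\mid\chi(M;\mathbb{Z}/p)$ for all $N$ and hence $\chi(M;\mathbb{Z}/p)=0$, consistently with the statement.

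The argument is essentially an assembly of Theorem \ref{preprop} with the rank estimates, so I do not expect a genuine obstacle. The only steps needing care are: checking that each stabilizer $G_{x}$ satisfies the hypotheses of Lemma \ref{lemm1}/Lemma \ref{lem3} on the whole connected manifold $M$ — immediate from effectiveness of $G$ and from $x\in F(G_{x})$, which also provides $\dim_{p}F(G_{x})\geq 0$ — and confirming that the finiteness hypotheses underlying Theorem \ref{preprop} are available for cohomology manifolds.
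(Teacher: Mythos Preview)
Your proof is correct and follows essentially the same approach as the paper: apply Theorem~\ref{preprop} to a maximal elementary abelian $p$-subgroup acting effectively, then use the rank bounds of Lemma~\ref{lemm1} (and Lemma~\ref{lem3} in the oriented case) on each stabilizer $G_{x}$ to force the high-index strata to vanish, yielding the claimed divisibility. Your write-up is slightly more explicit than the paper's (you spell out that $M_{i}=\emptyset$ for $i>m$ and treat the case $k=+\infty$), but the underlying argument is identical.
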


\begin{proof}
Suppose that an elementary $p$-group $G=(\mathbb{Z}/p)^{n_{p}}$ acts
effectively on $M$ for $n_{p}=\mathrm{rk}_{p}(\mathrm{Homeo}(M)).$ If the
group action is free, we have $p^{n_{p}}\mid \chi (M)$ by Theorem \ref%
{preprop} and the statements are obvious. In the following, we suppose that
the group action is not free. We let $X=M$ and $X_{i}$ as in Theorem 2.5 for
the sake of sticking to the notation of Theorem 2.5. Denote by $G_{x}$ the
stabilizer of $x\in X_{i}$ for nonempty $X_{i}.$ By Lemma \ref{lemm1} we
have $i:=\mathrm{rank}(G_{x})\leq \frac{r}{2}$ if $p\neq 2$ and $\mathrm{rank%
}(G_{x})\leq r$ if $p=2.$ Therefore, we have $p^{n_{p}-i}\geq p^{n_{p}-\frac{%
r}{2}}$ when $p\neq 2$ (or $p^{n_{p}-i}\geq p^{n_{p}-r}$ when $p=2$). This
implies that $p^{n_{p}-[\frac{r}{2}]}\mid \chi (M;\mathbb{Z}/p)$ (or $%
2^{n_{2}-r}\mid \chi (M;\mathbb{Z}/2)$ when $p=2$) considering Theorem \ref%
{preprop}. A similar argument proves the orientation-preserving case using
Lemma \ref{lem3}, by noting that the subgroup $G_{x}$ acts on $M$
orientation-preservingly if so does $G$.
\end{proof}

\begin{remark}
When $M$ is a surface, Theorem \ref{prop} was already known to Kulkarni \cite%
{fu}.
\end{remark}

Fixing a basis $\{a_{1},\ldots ,a_{n}\}$ for the free group $F_{n},$ we
define several elements in $\mathrm{Aut}(F_{n})$ as the following. The
inversions are defined as 
\begin{equation*}
e_{i}:a_{i}\longmapsto a_{i}^{-1},a_{j}\longmapsto a_{j}\text{ }(j\neq i);
\end{equation*}%
while the permutations are 
\begin{equation*}
(ij):a_{i}\longmapsto a_{j},a_{j}\longmapsto a_{i},a_{k}\longmapsto a_{k}%
\text{ }(k\neq i,j).
\end{equation*}%
The subgroup $N<\mathrm{Aut}(F_{n})$ generated by all $e_{i}$ $(i=1,\ldots
,n)$ is isomorphic to $(\mathbb{Z}/2)^{n}.$ The subgroup $W_{n}<\mathrm{Aut}%
(F_{n})$ is generated by $N$ and all $(ij)$ $(1\leq i\neq j\leq n).$ Denote $%
SW_{n}=W_{n}\cap \mathrm{SAut}(F_{n})$ and $SN=N\cap \mathrm{SAut}(F_{n}).$
The element $\Delta =e_{1}e_{2}\cdots e_{n}$ is central in $W_{n}$ and lies
in $\mathrm{SAut}(F_{n})$ precisely when $n$ is even.

The following result is Proposition 3.1 of \cite{bv}.

\begin{lemma}
\label{lemlast}Suppose $n\geq 3$ and let $f$ be a homomorphism from $\mathrm{%
SAut}(F_{n})$ to a group $G$. If $f|_{SW_{n}}$ has non-trivial kernel $K$,
then one of the following holds:

1. $n$ is even, $K=\langle \Delta \rangle $ and $f$ factors through $\mathrm{%
PSL}(n,\mathbb{Z})$,

2. $K=SN$ and the image of $f$ is isomorphic to $\mathrm{SL}(n,\mathbb{Z}/2)$%
, or

3. $f$ is the trivial map.
\end{lemma}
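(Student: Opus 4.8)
The plan is to exploit that $\ker f$ is normal in all of $\mathrm{SAut}(F_{n})$: if $\widehat{K}$ denotes the normal closure of $K$ in $\mathrm{SAut}(F_{n})$, then $\widehat{K}\subseteq\ker f$, so it suffices to determine $\widehat{K}$ for each nontrivial normal subgroup $K$ of $SW_{n}$ and to read off the three outcomes. Write $\phi\colon\mathrm{SAut}(F_{n})\to\mathrm{SL}(n,\mathbb{Z})$ for the abelianization map of the introduction, $\phi_{2}$ and $\bar\phi$ for its composites with the reductions $\mathrm{SL}(n,\mathbb{Z})\to\mathrm{SL}(n,\mathbb{Z}/2)$ and $\mathrm{SL}(n,\mathbb{Z})\to\mathrm{PSL}(n,\mathbb{Z})$, let $\mathrm{IA}_{n}=\ker\phi$ be the Torelli subgroup, and let $\rho_{ij}\colon a_{i}\mapsto a_{i}a_{j}$, $\lambda_{ij}\colon a_{i}\mapsto a_{j}a_{i}$ be the Nielsen transvections.

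First I would classify the nontrivial normal subgroups of $SW_{n}$. The conjugation action of $SW_{n}$ on its abelian normal subgroup $SN$ factors through $SW_{n}/SN\cong S_{n}$ and identifies $SN$ with the sum-zero submodule of the $\mathbb{F}_{2}[S_{n}]$-permutation module, which is irreducible when $n$ is odd and, when $n$ is even, has the single nonzero proper submodule $\langle\Delta\rangle$. Combining this with the classification of normal subgroups of $S_{n}$ for $n\geq 5$ and a direct check for $n=3$ (where $SW_{3}\cong S_{4}$) and $n=4$, one obtains: every nontrivial $K\triangleleft SW_{n}$ contains $\langle\Delta\rangle$ if $n$ is even and contains $SN$ if $n$ is odd, and moreover $K$ either equals $\langle\Delta\rangle$ (possible only for $n$ even) or contains $SN$. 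The point needing care is that a normal subgroup meeting $SN$ exactly in $\langle\Delta\rangle$ yet projecting onto a nontrivial subgroup of $S_{n}$ cannot exist: conjugating a lift of a nontrivial permutation by diagonal sign matrices, and crucially by sign-corrected \emph{odd} permutations, produces mixed-sign elements generating the full augmentation submodule of $SN$, which is all of $SN$ since the permutation module has no nonzero trivial $A_{n}$-quotient.

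Next I would compute, inside $\mathrm{SAut}(F_{n})$, the only two normal closures that are actually needed. For $\widehat{SN}$: the inclusion $\widehat{SN}\subseteq\ker\phi_{2}$ is immediate since $SN\subseteq\ker\phi_{2}$ and $\ker\phi_{2}$ is normal. For the reverse, the identity $\rho_{ij}^{2}=[\rho_{ij},e_{j}e_{k}]$ for any $k\neq i,j$ — valid because $e_{k}$ commutes with $\rho_{ij}$ while $e_{j}\rho_{ij}e_{j}=\rho_{ij}^{-1}$, and $e_{j}e_{k}\in SN$ — shows that every squared transvection lies in $\widehat{SN}$; combined with the fact that $\ker\phi_{2}$, the level-$2$ congruence subgroup, is normally generated by squares of Nielsen generators together with Magnus generators of $\mathrm{IA}_{n}$, each of the latter being itself a product of conjugates of the $e_{i}e_{j}$, one gets $\ker\phi_{2}\subseteq\widehat{SN}$, hence $\widehat{SN}=\ker\phi_{2}$. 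For $\widehat{\langle\Delta\rangle}$ with $n$ even: since $\Delta$ inverts every generator one has $[\rho_{ij},\Delta]=\rho_{ij}\lambda_{ij}^{-1}$, and similar commutators of $\Delta$ with the signed permutations in $SW_{n}$ suffice to show $\widehat{\langle\Delta\rangle}=\ker\bar\phi$, using that $\ker\bar\phi=\langle\mathrm{IA}_{n},\Delta\rangle$ and $\mathrm{SAut}(F_{n})/\ker\bar\phi\cong\mathrm{PSL}(n,\mathbb{Z})$.

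The three outcomes then fall out. If $K\supseteq SN$ (automatic for $n$ odd, and for $n$ even unless $K=\langle\Delta\rangle$), then $\ker f\supseteq\widehat{SN}=\ker\phi_{2}$, so $\mathrm{Im}\,f$ is a quotient of $\mathrm{SL}(n,\mathbb{Z}/2)$, which is simple for $n\geq 3$; if in addition $K\supsetneq SN$ then $\widehat{K}$ contains an element with nontrivial underlying permutation, so $\ker f\supsetneq\ker\phi_{2}$ and hence $\mathrm{Im}\,f=1$ (outcome 3), while if $K=SN$ then $\ker f=\ker\phi_{2}$, so $K=\ker\phi_{2}\cap SW_{n}=SN$ and $\mathrm{Im}\,f\cong\mathrm{SL}(n,\mathbb{Z}/2)$ (outcome 2); and if $K=\langle\Delta\rangle$ with $n$ even, then $\ker f\supseteq\widehat{\langle\Delta\rangle}=\ker\bar\phi$, so $f$ factors through $\mathrm{PSL}(n,\mathbb{Z})$ (outcome 1). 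The main obstacle is the third paragraph: one must show that the normal closures of $SN$ and of $\langle\Delta\rangle$ already contain the entire Torelli subgroup $\mathrm{IA}_{n}$ — not merely that they surject onto the correct congruence subgroup of $\mathrm{SL}(n,\mathbb{Z})$ — which requires either a careful pass through the Nielsen/Magnus relations (writing each $\mathrm{IA}_{n}$-generator as a product of conjugates of the torsion elements $e_{i}e_{j}$, resp.\ $\Delta$) or an appeal to the known description of the small congruence quotients of $\mathrm{SAut}(F_{n})$.
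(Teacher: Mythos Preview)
The paper does not give its own proof of this lemma: it is quoted verbatim as Proposition~3.1 of Bridson--Vogtmann~\cite{bv}, with no argument supplied. So there is nothing in the paper to compare your proposal against beyond the citation itself.

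That said, your outline is essentially a reconstruction of the Bridson--Vogtmann argument and is structurally sound: classify the nontrivial normal subgroups $K\triangleleft SW_{n}$, compute the normal closure $\widehat{K}$ in $\mathrm{SAut}(F_{n})$ for the two minimal cases $K=\langle\Delta\rangle$ and $K=SN$, and read off the three outcomes using simplicity of $\mathrm{SL}(n,\mathbb{Z}/2)$. The module-theoretic classification of normal subgroups of $SW_{n}$ is correct, including the point that for $n=4$ one must rule out normal subgroups meeting $SN$ only in $\langle\Delta\rangle$ while projecting onto $V_{4}\triangleleft S_{4}$.

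The one place where your sketch is thinner than it should be is exactly where you flag it: showing $\mathrm{IA}_{n}\subseteq\widehat{SN}$ (and likewise for $\widehat{\langle\Delta\rangle}$). Your assertion that ``each Magnus generator of $\mathrm{IA}_{n}$ is itself a product of conjugates of the $e_{i}e_{j}$'' is not how one actually closes this gap. A cleaner route is to work in the quotient $Q=\mathrm{SAut}(F_{n})/\widehat{SN}$: from $\rho_{ij}^{2}\in\widehat{SN}$ and $\rho_{ij}\lambda_{ij}^{-1}\in\widehat{SN}$ (both obtained, as you note, from commutators with $e_{j}e_{k}$ and $e_{i}e_{k}$) one gets $\bar\rho_{ij}=\bar\lambda_{ij}$ and $\bar\rho_{ij}^{2}=1$ in $Q$; then the relation $[\rho_{ij},\lambda_{ik}]=1$ in $\mathrm{SAut}(F_{n})$ forces $[\bar\rho_{ij},\bar\rho_{ik}]=1$ in $Q$, and the remaining Steinberg relations already hold among the $\rho$'s, so $Q$ is a quotient of the Steinberg group over $\mathbb{Z}/2$, hence equal to $\mathrm{SL}(n,\mathbb{Z}/2)$. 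This simultaneously shows $K_{ijk}\in\widehat{SN}$ without exhibiting it directly as a product of conjugates of sign elements. The analogous step for $\widehat{\langle\Delta\rangle}$ likewise goes through the quotient rather than elementwise, using that $[\rho_{ij},\Delta]=\rho_{ij}\lambda_{ij}^{-1}$ already kills the difference between left and right transvections.
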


When $n=2m$ is even, for each $1\leq i\leq m$ define $R_{i}:F_{n}\rightarrow
F_{n}$ as $a_{2i-1}\longmapsto a_{2i}^{-1},a_{2i}\longmapsto
a_{2i}^{-1}a_{2i-1},a_{j}\longmapsto a_{j}$ $(j\neq 2i,2i-1).$ Let $T<%
\mathrm{SAut}(F_{n})$ be the subgroup generated by all $R_{i},$ $i=1,\ldots
,m.$ By Lemma 3.2 of Bridson-Vogtmann \cite{bv}, $T$ is isomorphic to $(%
\mathbb{Z}/3)^{m}.$ The following result is Proposition 3.4 of \cite{bv}.

\begin{lemma}
\label{lem10}For $m\geq 2$ and any group $G$, let $\phi :\mathrm{SAut}%
(F_{2m})\rightarrow G$ be a homomorphism. If $\phi |_{T}$ is not injective,
then $\phi $ is trivial.
\end{lemma}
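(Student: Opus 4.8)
The plan is to use the large group of symmetries of $\mathrm{SAut}(F_{2m})$ that normalises $T$ to put $\ker(\phi)$ into a normal form, and then to show the normal closure of the resulting torsion element is all of $\mathrm{SAut}(F_{2m})$; Lemma \ref{lemlast} provides an alternative way to conclude.

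First I would record the symmetries of $T$. The subgroup of $\mathrm{SAut}(F_{2m})$ permuting the pairs $\{a_{2i-1},a_{2i}\}$ is a copy of the symmetric group $S_m$, since interchanging the $i$-th and $j$-th pairs equals the product of the transpositions $(a_{2i-1}\,a_{2j-1})$ and $(a_{2i}\,a_{2j})$ of basis elements, an even permutation; this $S_m$ normalises $T$ and acts on $T\cong\mathbb{F}_3^{\,m}$ by permuting coordinates. Since $\ker(\phi|_T)=\ker(\phi)\cap T$ is carried to itself by these conjugations, it is a nonzero $S_m$-submodule of the permutation module $\mathbb{F}_3^{\,m}$, and an elementary check of the submodule lattice (for $m\geq 2$) shows that any nonzero submodule is either the line $C=\langle(1,\dots,1)\rangle$ or contains the augmentation submodule $A=\{v\in\mathbb{F}_3^{\,m}:\sum_i v_i=0\}$. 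Thus, after conjugating by a pair-permutation if necessary, we may assume that $R_1R_2^{-1}\in\ker(\phi)$, or that $R_1R_2\cdots R_m\in\ker(\phi)$.

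The main step is to show that the normal closure $N$ of $R_1R_2^{-1}$ (respectively of $R_1R_2\cdots R_m$) in $\mathrm{SAut}(F_{2m})$ is the whole group, which forces $\phi$ to be trivial. Under abelianisation $\mathrm{SAut}(F_{2m})\twoheadrightarrow\mathrm{SL}_{2m}(\mathbb{Z})$ these elements map to the block matrices $M\oplus M^{-1}\oplus I_{2m-4}$ and $M\oplus\cdots\oplus M$ ($m$ summands), where $M=\left(\begin{smallmatrix}0&1\\-1&-1\end{smallmatrix}\right)$ has order $3$; as $M$ is not scalar modulo any prime these images are non-central modulo every prime, so by the normal subgroup structure of $\mathrm{SL}_{2m}(\mathbb{Z})$ (for $2m\geq 4$ a non-central normal subgroup has finite index by the congruence subgroup property, and $\mathrm{PSL}_{2m}(\mathbb{F}_p)$ is simple) the image of $N$ is all of $\mathrm{SL}_{2m}(\mathbb{Z})$. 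To promote this to $N=\mathrm{SAut}(F_{2m})$ I would use that $\mathrm{SAut}(F_n)$ $(n\geq 3)$ is normally generated by a single Nielsen transvection — equivalently, that no proper normal subgroup of $\mathrm{SAut}(F_n)$ surjects onto $\mathrm{SL}_n(\mathbb{Z})$, i.e.\ the $\mathrm{SL}_n(\mathbb{Z})$-coinvariants of the abelianised kernel of $\mathrm{SAut}(F_n)\to\mathrm{SL}_n(\mathbb{Z})$ vanish. Concretely, writing $\tau$ for the transvection $a_1\mapsto a_1a_3$ (fixing the other generators), one computes that $\tau(R_1R_2)\tau^{-1}(R_1R_2)^{-1}$ lies in $N$, is supported on $a_1,\dots,a_4$, and maps to a non-central unipotent of $\mathrm{SL}_4(\mathbb{Z})$, so it remains only to show that $N$ meets the conjugacy class of a transvection.

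A perhaps cleaner finish goes through Lemma \ref{lemlast}. Using normality of $\ker(\phi)$ one shows that $\phi|_{SW_{2m}}$ is non-injective, so one of the three alternatives of Lemma \ref{lemlast} holds; the first ($\phi$ factors through $\mathrm{PSL}(2m,\mathbb{Z})$) and the second (the image is $\mathrm{SL}(2m,\mathbb{Z}/2)$, with $\phi$ the mod-$2$ reduction) are ruled out by the hypothesis that $\phi|_T$ is not injective, because the block-diagonal embedding $T=(\mathbb{Z}/3)^m\hookrightarrow\mathrm{SL}_{2m}(\mathbb{Z})$ remains injective in $\mathrm{PSL}_{2m}(\mathbb{Z})$ (any element of $T$ mapping to $\pm I$ in $\mathrm{SL}_{2m}(\mathbb{Z})$ must be trivial) and also after reduction modulo $2$ (each block $M\bmod 2$ still has order $3$). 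Hence the third alternative holds and $\phi$ is trivial. I expect the main obstacle to be exactly the upgrade from ``$N$ surjects onto $\mathrm{SL}_{2m}(\mathbb{Z})$'' to ``$N=\mathrm{SAut}(F_{2m})$'' — the control of the $\mathrm{IA}$-part — or, in the alternative route, producing the relation that makes $\phi|_{SW_{2m}}$ non-injective; the remaining ingredients, the bookkeeping with elementary abelian $3$-groups and the congruence subgroup property of $\mathrm{SL}_{2m}(\mathbb{Z})$, are routine.
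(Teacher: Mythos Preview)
The paper does not prove this lemma: it is simply quoted as Proposition~3.4 of Bridson--Vogtmann \cite{bv}, so there is no in-paper argument to compare your attempt against. What you have written is an independent attempt at the Bridson--Vogtmann result.

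Your opening move --- using the copy of $S_m$ permuting the pairs $\{a_{2i-1},a_{2i}\}$ to analyse $\ker(\phi)\cap T$ as an $\mathbb F_3[S_m]$-submodule and reduce to the two cases $R_1R_2^{-1}\in\ker\phi$ or $R_1\cdots R_m\in\ker\phi$ --- is correct. Neither of your two proposed finishes, however, is complete, and you yourself flag both gaps in your final sentence.

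In the first route you show that the normal closure $N\subseteq\ker\phi$ surjects onto $\mathrm{SL}_{2m}(\mathbb Z)$ and then try to promote this to $N=\mathrm{SAut}(F_{2m})$. The equivalence you invoke --- ``$\mathrm{SAut}(F_n)$ is normally generated by a transvection'' $\Leftrightarrow$ ``no proper normal subgroup surjects onto $\mathrm{SL}_n(\mathbb Z)$'' --- is only a one-way implication: the right-hand statement implies the left, but Gersten's theorem (the left-hand statement) does \emph{not} by itself rule out a proper normal subgroup mapping onto $\mathrm{SL}_{2m}(\mathbb Z)$. Knowing $\tau\in N\cdot\mathrm{IA}_{2m}$ does not put $\tau$ in $N$. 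You end with ``it remains only to show that $N$ meets the conjugacy class of a transvection'' --- precisely, and that explicit computation is the entire content of the step.

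In the second route the sentence ``using normality of $\ker(\phi)$ one shows that $\phi|_{SW_{2m}}$ is non-injective'' carries the whole argument and is left as a bare assertion. Everything you have placed in $\ker\phi$ so far is $3$-torsion coming from $T$; producing a nontrivial element of the signed-permutation group $SW_{2m}$ in $\ker\phi$ requires an explicit identity (some specific product of conjugates of $R_1R_2^{\pm1}$ or of $R_1\cdots R_m$ lying in $SW_{2m}\setminus\{1\}$), and none is given. Without it, Lemma~\ref{lemlast} cannot be invoked. As written, then, the proposal is a reasonable plan with the crux still to be supplied, not yet a proof.
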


\begin{proof}[Proof of Theorem \protect\ref{main}]
Let $f:\mathrm{SAut}(F_{n})\rightarrow \mathrm{Homeo}(M)$ be a group
homomorphism. Since the Euler characteristics $\chi (M;\mathbb{Z}/2)=\chi (M;%
\mathbb{Z}/3)$ (cf. \cite{bre2}, Theorem 5.2 and Corollary 5.7), they will
be simply denoted by $\chi (M)$. Since any action of $\mathrm{SAut}(F_{n})$
on a non-orientable manifold $M$ can be uniquely lifted to be an action on
the orientable double covering $\bar{M}$ (cf. \cite{b}, Cor. 9.4, p.67), we
may assume that $M$ is oriented and the group action is
orientation-preserving by noting that $\mathrm{SAut}(F_{n})$ is perfect (cf. 
\cite{ger}). When $M$ is non-orientable and $\chi (M)\not\equiv 0\func{mod}%
3, $ we would still have $\chi (\bar{M})\not\equiv 0\func{mod}3.$

When $n=3,$ the manifold $M$ is of dimension one. This case is already
proved by Bridson and Vogtmann \cite{bv}. Suppose that $n\geq 4.$ Choose $m=[%
\frac{n}{2}]$ (the integer part) and $T\cong (\mathbb{Z}/3)^{m}$. Let $%
\mathrm{SAut}(F_{2m})$ be the subgroup of $\mathrm{SAut}(F_{n})$ fixing $%
a_{n}$ if $n$ is odd. Note that $\mathrm{SAut}(F_{n})$ is normally generated
by a Nielsen automorphism in $\mathrm{SAut}(F_{2m})$ (cf. \cite{ger}). If $f$
is not trivial, the restriction $f|_{\mathrm{SAut}(F_{2m})}$ is not trivial
and thus the map $f|_{T}$ is injective by Lemma \ref{lem10}. Theorem \ref%
{prop} implies that $3\mid \chi (M),$ by noting that $n-r\geq 2.$ This is a
contradiction in the non-orientable case. If $\func{Im}f\ $contains a copy
of $(\mathbb{Z}/2)^{n-2}$, Theorem \ref{prop} would imply that $2\mid \chi
(M).$ This would be a contradiction to the assumption that $\chi
(M)\not\equiv 0\func{mod}6$ for the orientable manifold $M.$ Therefore, the
restriction $f|_{SN}$ is not injective and Case 1 in Lemma \ref{lemlast}
cannot happen, since $SN\cong (\mathbb{Z}/2)^{n-1}$. If case 2 happens, the
image satisfies $\func{Im}f=\mathrm{SL}(n,\mathbb{Z}/2).$ Let $x_{1i}(1)$
denote the matrix with 1s along the diagonal, 1 in the $(1,i)$-th position
and zeros elsewhere. Since the subgroup $\langle x_{12}(1),x_{13}(1),\cdots
,x_{1n}(1)\rangle \cong (\mathbb{Z}/2)^{n-1},$ we still have $2\mid \chi
(M). $ This is a contradiction, which implies that $f$ has to be trivial.
\end{proof}

\bigskip

From the above proof, we see that Theorem \ref{main} also holds for
cohomology manifolds over $\mathbb{Z}$.

\begin{remark}
For a specific $n,$ the conditions of Theorem \ref{main} may be improved.
For example, when $n$ is odd, Case 1 in Lemma \ref{lemlast} cannot happen. A
similar proof as that of Theorem \ref{main} shows that any action of $%
\mathrm{SAut}(F_{2k+1})$ $(k\geq 1)$ on an orientable manifold $M^{r}$ with $%
\chi (M)\not\equiv 0\func{mod}12$ (resp. $\chi (M)\not\equiv 0\func{mod}2$)
by homeomorphisms is trivial when $2k>r$ (resp. $2k\geq r$).
\end{remark}

\noindent \textbf{Acknowledgements}

The author would like to thank the referee for his/her detailed comments on
a previous version of this article and suggestions on the improvement. The
author is grateful to Prof. Xuezhi Zhao at Capital Normal University for
many helpful discussions. This work is supported by NSFC No. 11501459 and
Jiangsu Science and Technology Programme BK20140402.

\bigskip

Department of Mathematical Sciences, Xi'an Jiaotong-Liverpool University,
111 Ren Ai Road, Suzhou, Jiangsu 215123, China.

E-mail: Shengkui.Ye@xjtlu.edu.cn

\end{document}